\newif\iffundamenta
\theoremstyle{plain}
\newtheorem{theorem}{Theorem}
\newtheorem{proposition}[theorem]{Proposition}
\newtheorem{lemma}[theorem]{Lemma}
\theoremstyle{definition}
\newtheorem{definition}[theorem]{Definition}
\theoremstyle{remark}
\newtheorem{remark}[theorem]{Remark}
\newcommand\boundin{\partial_{\mathsf{in}}}
\newcommand\boundout{\partial_{\mathsf{out}}}
\title[Twist of Frobenius algebra and link homology]{Remark on twists of Frobenius algebra and link homology}
\author[N.~Ito]{Noboru Ito}
\address{Shinshu University}
\email{nito@shinshu-u.ac.jp}
\author[K.~Nakagane]{Keita Nakagane}
\email{keita.nakagane@gmail.com}
\author[J.~Yoshida]{Jun Yoshida}
\address{RIKEN AIP}
\email{jun.yoshida@riken.jp}
\date{August~19, 2025}
\subjclass[2020]{Primary 57K18; Secondary 57K16}
\keywords{Khovanov homology, Frobenius algebra}
\title{Remark on twists of Frobenius algebra and link homology}
\author{Noboru Ito, Keita Nakagane, and Jun Yoshida}
\date{September~9, 2025} 
\begin{document}

\iffundamenta
\baselineskip=17pt
\fi

\iffundamenta
\begin{abstract}
We discuss twists on Frobenius algebras in the context of link homology.
In his paper in 2006, Khovanov asserted that a twist of a Frobenius algebra yields an isomorphic chain complex on each link diagram.
Although the result has been widely accepted for nearly two decades, a subtle gap in the original proof was found in the induction step of the construction of the isomorphism.
Following discussion with Khovanov, we decided to provide a new proof.
Our proof is based on a detailed analysis of configurations of circles in each state.
\end{abstract}
\fi

\maketitle

\section{Introduction}\label{sec:Introduction}

Khovanov~\cite{Khovanov2000} defined a link invariant, which is nowadays known as \emph{Khovanov homology}.
Later, in \cite{BarNatan2005} and \cite{Khovanov2006}, it was suggested that there are some interesting variants of Khovanov homology indexed by Frobenius algebras.
Namely, for a link diagram $D$ and a commutative Frobenius algebra $A$, define a commutative cube $V_{D;A}$ by
\begin{equation*}
V_{D;A}(S)\coloneqq\bigotimes_{C\in\pi_0(D_S)} A_C
\end{equation*}
for each state $S$ on $D$, where $A_C$ is a copy of $A$ associated to a circle component of the smoothing $D_S$.
On the other hand, if $\theta\in A$ is an invertible element, then one can \emph{twist} the coalgebra structure of $A$ by $\theta$ as
\begin{equation*}
\varepsilon^\theta(x)\coloneqq \varepsilon(\theta x)
,\qquad
\Delta^\theta(x)\coloneqq\Delta(\theta^{-1}x)
.
\end{equation*}
We denote by $A^\theta$ the resulting Frobenius algebra.
In the paper \cite{Khovanov2006}, it is asserted that this twisted Frobenius algebra $A^\theta$ results in the same link homology as $A$, and this assertion is widely accepted for nearly two decades.
Khovanov however only gave a sketch of the proof.
We communicated with Khovanov, and it turned out that the result requires more careful and detailed proof.

We here elaborate the gap in the original construction.
In order to construct an isomorphism $V_{D;A^\theta}\cong V_{D;A}$, a key ingredient is a function $\nu:\coprod_{S}\pi_0(D_S)\to\mathbb Z$ such that the following square commutes for each state $S$ and each crossing $c\notin S$:
\begin{equation*}
\begin{tikzcd}[column sep=10ex]
V_{D;A^\theta}(S) \ar[r,"{\bigotimes_C \theta^{\nu(C)}}"] \ar[d,"\xi_c"'] & V_{D;A}(S) \ar[d,"\xi_c"] \\
V_{D;A^\theta}(S\cup\{c\}) \ar[r,"{\bigotimes_{C'}\theta^{\nu(C')}}"] & V_{D}(S\cup\{c\})
\end{tikzcd}
\end{equation*}
where $\xi_c$ is the structure morphism of the commutative cubes.
Since $\theta$ is invertible, it is obvious that the morphisms form an isomorphism of commutative cubes.
According to the original proof in \cite{Khovanov2006}, it is asserted that one can construct such a function $\nu$ on $D$ by induction on the cardinality $\lvert S\rvert$.
In each induction step, we need to make a choice of how we distribute the value $\nu(C)$ whenever $C$ splits into two in a new state.
The source of the problem is that such a choice cannot be arbitrary.

We can see the problem more explicitly in a concrete example; take $D$ to be the diagram for the right-handed trefoil in \cref{fig:Trefoil}.%
\begin{figure}
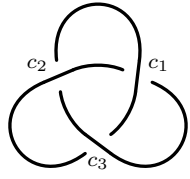

\setlength\belowdisplayskip{0pt}
\setlength\belowdisplayshortskip{0pt}
\begin{equation*}
\diagKnotTrefoil*[c_1,c_2,c_3]{positive,positive,positive}
\end{equation*}
\caption{The right-handed trefoil}
\label{fig:Trefoil}
\end{figure}
In this example, there is an unfortunate choice in the induction steps which causes deadlock in later steps.
Namely, \cref{fig:UnfortunateTwistingWeight} depicts the list of smoothings $D_S$ for the states $S$, where the red label attached to each circle $C$ indicates the value $\nu(C)$.%
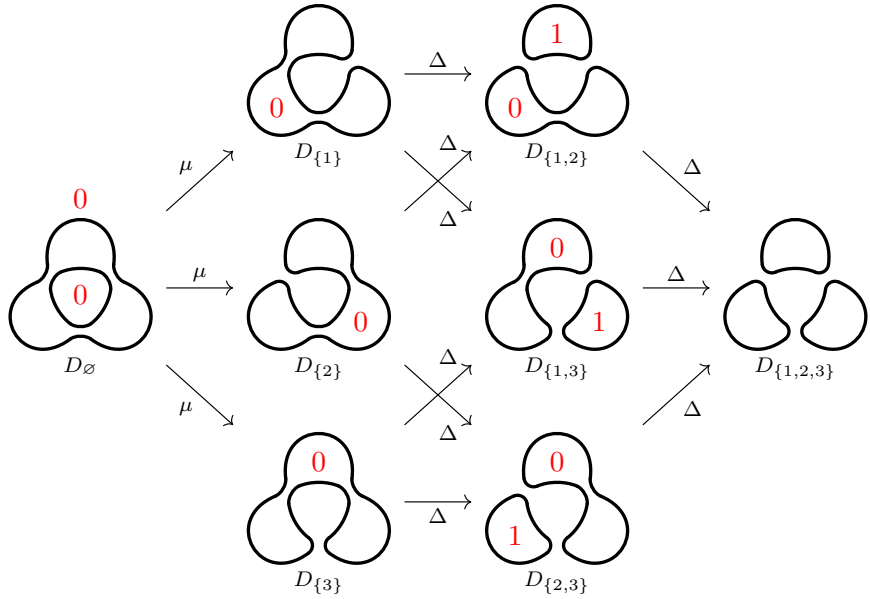
\begin{figure}
\setlength\belowdisplayskip{0pt}
\setlength\belowdisplayshortskip{0pt}
\begin{equation*}
\begin{tikzcd}[row sep=small]
& \underset{D_{\{1\}}}{\pgfpicture\diagKnotTrefoil*{h-smooth,smooth,smooth}[.8]\color{red}\inserttext*[x=-7pt,y=-5pt]{LI}{0}\endpgfpicture} \ar[r,"\Delta"] \ar[dr,"\Delta"' pos=.9] & \underset{D_{\{1,2\}}}{\pgfpicture\diagKnotTrefoil*{h-smooth,h-smooth,smooth}[.8]\color{red}\inserttext*[y=8pt]{UI}{1}\inserttext*[x=-7pt,y=-5pt]{LI}{0}\endpgfpicture} \ar[dr,"\Delta"] & \\
\underset{D_\varnothing}{\pgfpicture\diagKnotTrefoil*{smooth,smooth,smooth}[.8]\color{red}\inserttext*[y=-10pt]{UI}{0}\inserttext*[y=8pt]{UO}{0}\endpgfpicture} \ar[ur,"\mu"] \ar[r,"\mu"] \ar[dr,"\mu"'] & \underset{D_{\{2\}}}{\pgfpicture\diagKnotTrefoil*{smooth,h-smooth,smooth}[.8]\color{red}\inserttext*[x=7pt,y=-5pt]{RI}{0}\endpgfpicture} \ar[ur,"\Delta" pos=.9] \ar[dr,"\Delta"' pos=.9] & \underset{D_{\{1,3\}}}{\pgfpicture\diagKnotTrefoil*{h-smooth,smooth,h-smooth}[.8]\color{red}\inserttext*[x=7pt,y=-5pt]{RI}{1}\inserttext*[y=8pt]{UI}{0}\endpgfpicture} \ar[r,"\Delta"] & \underset{D_{\{1,2,3\}}}{\diagKnotTrefoil*{h-smooth,h-smooth,h-smooth}[.8]} \\
& \underset{D_{\{3\}}}{\pgfpicture\diagKnotTrefoil*{smooth,smooth,h-smooth}[.8]\color{red}\inserttext*[y=8pt]{UI}{0}\endpgfpicture} \ar[ur,"\Delta" pos=.9] \ar[r,"\Delta"'] & \underset{D_{\{2,3\}}}{\pgfpicture\diagKnotTrefoil*{smooth,h-smooth,h-smooth}[.8]\color{red}\inserttext*[x=-7pt,y=-5pt]{LI}{1}\inserttext*[y=8pt]{UI}{0}\endpgfpicture} \ar[ur,"\Delta"'] &
\end{tikzcd}
\end{equation*}
\caption{An ``unfortunate choice'' of twisting weights}
\label{fig:UnfortunateTwistingWeight}
\end{figure}
It is seen that the function $\nu$ cannot extend to $\pi_0(D_{\{1,2,3\}})$ satisfying the condition.

Note that a required function $\nu$ on $D$ \emph{does exist} in the example above.
For example, by swapping the values of the two circles in $D_{\{1,2\}}$ in \cref{fig:UnfortunateTwistingWeight}, one can extend $\nu$ to $\pi_0(D_{\{1,2,3\}})$.
This choice however seems to be \emph{ad-hoc}, and it is uncertain if a similar technique is available for general diagrams having a similar problem.
The observation implies that, if there is an algorithm constructing $\nu$ for arbitrary link diagrams, then we would need finer control of a path in the cube through which $\nu$ is extended.

The plan of paper is as follows.
After preparing basic notations and terminology in \cref{sec:Notation}, we review the statement of Khovanov's assertion and state our main result in \cref{sec:KhovanovProposition}.
The actual construction of the function $\nu$ will be done in \cref{sec:Construction}.

\subsection*{Acknowledgments}
The authors would like to thank Professor Khovanov for valuable discussions and encouraging us to publish this work.
This work was supported by JSPS KAKENHI Grant Number JP25K06999.

\section{Notation and terminology}\label{sec:Notation}

Throughout the note, we will denote by $\mathbb K$ a fixed commutative ring, and tensor products are always taken over $\mathbb K$.
In addition, $A$ always denotes a commutative Frobenius algebra over $\mathbb K$.

A \emph{link diagram} in this paper is unoriented unless otherwise stated.
For a link diagram $D$, we write $\mathcal X(D)$ for the set of crossings of $D$.
A \emph{state} on $D$ is just a subset of $\mathcal X(D)$.
For each state $S\subset\mathcal X(D)$, we construct a $1$-dimensional submanifold $D_S\subset\mathbb R^2$ by replacing each crossing $c\in\mathcal X(D)$ with two disjoint arcs in two different ways according to whether $c\in S$ or not:
\begin{equation*}
\dashedcircled{.6}{\diagSmooth{H}{}}
\;\xleftarrow{c\notin S}\;
\dashedcircled{.6}{\diagCross*[c]{negative}}
\;\xrightarrow{c \in S}\;
\dashedcircled{.6}{\diagSmooth{V}{}}
\quad.
\end{equation*}
Write $\pi_0(D_S)$ for the set of circles in $D_S$.
If $(S,c)$ is a pair of $S\subset\mathcal X(D)$ and $c\in\mathcal X(D)\setminus S$, then $D_{S\cup\{c\}}$ is obtained from $D_S$ in either of the following ways:
\begin{itemize}
  \item if $c$ is adjacent to only a single circle in $D_S$, say $C\in\pi_0(D_S)$, then $c$ \emph{splits} $C$ into two circles in $D_{S\cup\{c\}}$;
  \item if $c$ is adjacent to two circles in $D_S$, say $C_1,C_2\in\pi_0(D_S)$, then $c$ \emph{merges} $C_1$ and $C_2$ into a single circle in $D_{S\cup\{c\}}$.
\end{itemize}
The circles in $D_S$ apart from $c$ are identified with the corresponding circles in $D_{S\cup\{c\}}$ and often denoted by the same symbols.
\begin{definition}
For a finite set $\Lambda$, a \emph{commutative $\Lambda$-cube} consists of a family $\{V(S)\}_{S\subset\Lambda}$ together with a family of morphisms $\xi_\lambda^S:V(S)\to V(S\cup\{\lambda\})$ for each pair of $S\subset\Lambda$ and $\lambda\in\Lambda\setminus S$ which make the diagram below commute:
\begin{equation}
\label{eq:CommutativeCubeCommutative}
\begin{tikzcd}
V(S) \ar[r,"\xi^S_\lambda"] \ar[d,"\xi^S_\mu"'] & V(S\cup\{\lambda\}) \ar[d,"\xi^{S\cup\{\lambda\}}_\mu"] \\
V(S\cup\{\mu\}) \ar[r,"\xi^{S\cup\{\mu\}}_\lambda"] & V(S\cup\{\lambda,\mu\})
\end{tikzcd}
\end{equation}
\end{definition}

For a commutative $\Lambda$-cube $V=\{V(S)\}_{S\subset\Lambda}$, the structure morphism $\xi^S_\lambda$ is often abbreviated by $\xi_\lambda$; hence, the condition \eqref{eq:CommutativeCubeCommutative} is also written $\xi_\lambda\xi_\mu=\xi_\mu\xi_\lambda$.

For a Frobenius algebra $A$ over $\mathbb K$, and for a link diagram $D$, we define a commutative $\mathcal X(D)$-cube $V_{D;A}$ by
\begin{equation*}
V_{D;A}(S) \coloneqq \bigotimes_{C\in\pi_0(D_S)} A_C
,
\end{equation*}
where $A_C$ is a copy of $A$ associated with a circle $C$ in $D_S$.
The structure morphism $\xi_c=\xi^S_c:V_{D;A}(S)\to V_{D;A}(S\cup\{c\})$ consists of
\begin{itemize}
  \item the comultiplication $\Delta:A_C\to A_{C'_1}\otimes A_{C'_2}$ if $c$ splits $C$ into $C'_1$ and $C'_2$;
  \item the multiplication $\mu:A_{C_1}\otimes A_{C_2}\to A_{C'}$ if $c$ merges $C_1$ and $C_2$ into $C'$;
  \item the identity on $A_C$ if $C$ is apart from $c$.
\end{itemize}

Given a commutative $\Lambda$-cube $V=\{V(S)\}_S$, one can construct a chain complex $C_V=\{C_V^i\}_i$ by $C_V^i\coloneqq\bigoplus_{\lvert S\rvert=i} V(S)$ with the differential $d=\sum\pm\xi_\lambda$, where the sign is chosen in an appropriate manner (see \cite{Khovanov2000} and \cite{BarNatan2002}).
In the case of $V=V_{D;A}$ for a link diagram $D$, we in particular write $C(D;A)\coloneqq C_{V_{D;A}}$.
It turns out that the homology $H^i(D;A)\coloneqq H^i(C^\ast(D;A),d)$ results in a link invariant for some Frobenius algebras $A$ (\cite{Khovanov2000} and \cite{Khovanov2006}).

\section{Twist of Frobenius algebra and Khovanov's proposition}\label{sec:KhovanovProposition}

We are interested in the comparison of the homology $H^i(D;A)$ for different Frobenius algebras $A$.
As a special case, we are reduced to the following.

\begin{definition}
For an invertible element $\theta\in A$, the \emph{twist} of $A$ by $\theta$ is a Frobenius algebra $A^\theta$ which is identical to $A$ as an algebra but with the ``twisted'' coalgebra structure given by
\begin{equation*}
\varepsilon^\theta(x)\coloneqq \varepsilon(\theta x)
,\qquad
\Delta^\theta(x)\coloneqq \Delta(\theta^{-1} x)
.
\end{equation*}
\end{definition}

In \cite[Proposition~3]{Khovanov2006}, the following is asserted.

\begin{proposition}[{\cite[Proposition~3]{Khovanov2006}}]
\label{prop:LinkHomologyFrobeniusTwist}
Let $A$ be a Frobenius algebra and $\theta\in A$ an invertible element.
Then, for every link diagram $D$, the commutative $\mathcal X(D)$-cubes $V_{D;A}$ and $V_{D;A^\theta}$ are isomorphic.
In particular, $C(D;A^\theta)\cong C(D;A)$ as chain complexes.
\end{proposition}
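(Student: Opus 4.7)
The strategy is to realize the desired isomorphism $V_{D;A^\theta}\xrightarrow{\sim}V_{D;A}$ componentwise through multiplication by powers of $\theta$. Concretely, I look for a function $\nu\colon\coprod_S\pi_0(D_S)\to\mathbb{Z}$ such that the map $\Phi_S\colon V_{D;A^\theta}(S)\to V_{D;A}(S)$ acting as multiplication by $\theta^{\nu(C)}$ on each tensor factor $A_C$ assembles into a morphism of commutative cubes. Invertibility of $\theta$ makes each $\Phi_S$ automatically invertible, so only compatibility with the structure morphisms $\xi_c$ remains to be checked.

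First, I would translate this compatibility into explicit numerical constraints on $\nu$. At a crossing $c\notin S$ the morphism $\xi_c$ acts as the identity on those circles of $D_S$ apart from $c$, so the square is automatically commutative in those factors; only the circles incident to $c$ impose a nontrivial condition. Using commutativity of $A$ together with the Frobenius identity $\Delta(ab)=(a\otimes 1)\Delta(b)$, a short calculation reduces the condition at a merge $C_1,C_2\mapsto C'$ to $\nu(C')=\nu(C_1)+\nu(C_2)$, and at a split $C\mapsto C_1',C_2'$ to $\nu(C_1')+\nu(C_2')=\nu(C)+1$. These are the only conditions to verify, and they are local in the crossing $c$.

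Next, rather than assigning values inductively along a chosen path through the cube---which, as the trefoil example in the introduction shows, can lead to an unresolvable conflict---I would define $\nu(C)$ in closed form from intrinsic planar data attached to the circle $C\subset D_S$. The merge rule forces $\nu$ to be additive under combining disjoint planar regions, which suggests an expression $\nu(C)=f(C,D_S)$ in which $f$ sums a local contribution over some feature of the smoothing on a distinguished side of $C$ (for instance, enclosed crossings or nested circles). The split rule then becomes the statement that the newly resolved crossing $c$ contributes exactly one unit, distributed in a canonical but asymmetric way between the two daughter circles according to how they sit in the plane. Producing a single such recipe that works uniformly at every crossing of every state is the technical content of \cref{sec:Construction}.

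Once such a $\nu$ is in hand, the $\Phi_S$ assemble into an isomorphism of commutative cubes $V_{D;A^\theta}\xrightarrow{\sim}V_{D;A}$, whence passing to the associated total complexes yields $C(D;A^\theta)\cong C(D;A)$, as required. I expect the main obstacle to lie not in the algebraic verification---which is a one-line calculation using the Frobenius relations---but in the global combinatorial problem of producing a single planar recipe for $\nu$ that circumvents the path-dependence phenomenon illustrated in \cref{fig:UnfortunateTwistingWeight}.
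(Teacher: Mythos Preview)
Your reduction is correct and matches the paper exactly: you have rediscovered \cref{lem:TwistComparison} and \cref{def:TwistingWeight}, and you correctly isolate the entire difficulty as the existence of a twisting weight $\nu$ (the paper's \cref{maintheorem}). But from that point on your proposal is not a proof---it is a statement of intent. You say you ``would define $\nu(C)$ in closed form from intrinsic planar data'' and suggest counting enclosed crossings or nested circles, yet you never write down a formula, let alone verify the merge and split conditions for it. Since the construction of $\nu$ is precisely the content that Khovanov's original sketch was missing, this is the whole problem, and deferring it to ``the technical content of \cref{sec:Construction}'' leaves the gap exactly where it started.

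It is also worth noting that the paper does \emph{not} proceed by finding a closed-form planar invariant of a circle. Instead it first reduces, via crossing changes (\cref{lem:WeightCrossChange}), to diagrams with $D_\varnothing$ connected, and then builds $\nu_S$ by induction on $|S|$. The induction is not naive: at each step a dichotomy (\cref{lem:TargetCircle}) decides whether every circle of $D_S$ is the unique out-boundary of some saddle component, or whether there is a single ``hub'' circle adjacent to all crossings of $S$; the two cases receive different definitions of $\nu_S$, and well-definedness and compatibility require the configuration lemmas \cref{lem:NonGParallel} and \cref{lem:ExtWellDefined}. None of this is visible from a local planar recipe, and your heuristic that ``the newly resolved crossing contributes exactly one unit, distributed \dots\ according to how they sit in the plane'' would have to survive simultaneously at every edge of the cube---exactly the global consistency that the trefoil example shows is delicate. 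If you believe a closed-form $\nu$ exists, you must exhibit it and check both conditions; otherwise the argument is incomplete.
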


In \cite{Khovanov2006}, Khovanov only gave a sketch of the proof of \cref{prop:LinkHomologyFrobeniusTwist}, though it has a subtle gap in its induction step of the construction of the isomorphism.
The goal of the paper is to give a complete proof for \cref{prop:LinkHomologyFrobeniusTwist}.

We first reduce \cref{prop:LinkHomologyFrobeniusTwist} to a combinatorial problem.

\pushQED\qed
\begin{lemma}
\label{lem:TwistComparison}
For an invertible element $\theta\in A$, the following commute for every $p,q\in\mathbb Z$:
\begin{equation*}
\begin{tikzcd}[baseline=(\tikzcdmatrixname-2-2.south)]
A^\theta \ar[r,"\theta^{p+q-1}"] \ar[d,"\Delta^\theta"'] & A \ar[d,"\Delta"] \\
A^\theta\otimes A^\theta \ar[r,"\theta^p\otimes\theta^q"] & A\otimes A
\end{tikzcd}
\qquad
\begin{tikzcd}[baseline=(\tikzcdmatrixname-2-2.south)]
A^\theta\otimes A^\theta \ar[r,"{\theta^p\otimes\theta^q}"] \ar[d,"{\mu^\theta(=\mu)}"'] & A\otimes A \ar[d,"\mu"] \\
A^\theta \ar[r,"\theta^{p+q}"] & A
\end{tikzcd}
\qedhere
\end{equation*}
\end{lemma}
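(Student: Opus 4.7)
The plan is to unfold all the definitions and reduce each square to a standard identity in the Frobenius algebra $A$. Since $A^\theta$ agrees with $A$ as an algebra (only the coalgebra structure is twisted), the commutativity of the second square is essentially just the associativity/commutativity of $\mu$: for $x,y\in A^\theta$ one has
\begin{equation*}
\mu\bigl((\theta^p\otimes\theta^q)(x\otimes y)\bigr) = \mu(\theta^px\otimes\theta^qy) = \theta^{p+q}xy = \theta^{p+q}\mu^\theta(x\otimes y),
\end{equation*}
so that square is immediate.

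For the first square, the key input is the Frobenius relation, which in a commutative Frobenius algebra says that $\Delta$ is an $A$-bimodule map:
\begin{equation*}
(a\otimes 1)\Delta(x) \;=\; \Delta(ax) \;=\; (1\otimes a)\Delta(x)\qquad\text{for all }a,x\in A.
\end{equation*}
I would first apply this iteratively to show that for any $p,q\in\mathbb Z$ and $y\in A$,
\begin{equation*}
(\theta^p\otimes\theta^q)\Delta(y) = (\theta^p\otimes 1)(1\otimes\theta^q)\Delta(y) = (\theta^p\otimes 1)\Delta(\theta^qy) = \Delta(\theta^{p+q}y),
\end{equation*}
where invertibility of $\theta$ allows negative exponents. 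Setting $y=\theta^{-1}x$ and recalling $\Delta^\theta(x)=\Delta(\theta^{-1}x)$ then gives
\begin{equation*}
(\theta^p\otimes\theta^q)\Delta^\theta(x) = \Delta(\theta^{p+q}\cdot\theta^{-1}x) = \Delta(\theta^{p+q-1}x),
\end{equation*}
which is exactly the commutativity of the first square.

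There is no real obstacle here; the lemma is a direct bookkeeping consequence of the Frobenius identity together with commutativity of $A$. The only subtle point is making sure the shift by $-1$ in the exponent $p+q-1$ is traced correctly, which originates from the single $\theta^{-1}$ appearing in the definition of $\Delta^\theta$. The real content of the paper — the nontrivial combinatorial choice of the exponents $\nu(C)$ on each circle — uses this lemma as a building block but is deferred to \cref{sec:Construction}.
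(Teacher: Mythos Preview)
Your argument is correct and matches the paper's implicit reasoning: the paper states this lemma with a \qedhere\ inside the statement and gives no proof, treating both squares as immediate from the definitions and the Frobenius identity $(a\otimes 1)\Delta(x)=\Delta(ax)=(1\otimes a)\Delta(x)$. Your write-up simply spells out that unwinding, and the bookkeeping with the exponent shift $p+q-1$ coming from the $\theta^{-1}$ in $\Delta^\theta$ is handled correctly.
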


In view of \cref{lem:TwistComparison}, \cref{prop:LinkHomologyFrobeniusTwist} is proved by taking the following data.

\begin{definition}
\label{def:TwistingWeight}
Let $D$ be a link diagram.
A \emph{twisting weight} on $D$ is a map
\begin{equation*}
\nu:\coprod_{S\subset\mathcal X(D)} \pi_0(D_S)\to \mathbb Z
\end{equation*}
satisfying the following conditions:
\begin{enumerate}[label=\upshape(\alph*)]
  \item if $C\in\pi_0(D_S)$ splits into $C'_1,C'_2\in\pi_0(D_{S\cup\{c\}})$, then $\nu(C)=\nu(C'_1)+\nu(C'_2)-1$;
  \item if $C_1,C_2\in\pi_0(D_S)$ merge into $C'\in\pi_0(D_{S\cup\{c\}})$, then $\nu(C_1)+\nu(C_2)=\nu(C')$;
  \item if $C\in\pi_0(D_S)$ is identified with $C'\in\pi_0(D_{S\cup\{c\}})$, then $\nu(C)=\nu(C')$.
\end{enumerate}
\end{definition}

We can indeed prove \cref{prop:LinkHomologyFrobeniusTwist} for a diagram $D$ with a twisting weight $\nu$.
Namely, for each state $S\subset\mathcal X(D)$, define
\begin{equation*}
\widehat \theta^\nu:V_{D;A^\theta}(S)
= \bigotimes_{C\in\pi_0(D_S)} A^\theta_C
\xrightarrow{\bigotimes_C \theta^{\nu(C)}} \bigotimes_{C\in\pi_0(D_S)} A_C
= V_{D;A}(S)
.
\end{equation*}
By virtue of \cref{lem:TwistComparison}, $\widehat\theta^\nu$ commutes with the structure morphisms, so it defines a morphism of commutative $\mathcal X(D)$-cubes.
Furthermore, since $\theta$ is invertible, it is obvious that $\widehat\theta^\nu$ is in fact an isomorphism.
In other words, \cref{prop:LinkHomologyFrobeniusTwist} follows from the following.

\begin{theorem}
\label{maintheorem}
Every link diagram $D$ admits a twisting weight.
\end{theorem}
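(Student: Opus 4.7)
The plan is to construct the twisting weight $\nu$ by propagating values through the cube of states, using a rule at each split that is dictated by the planar configuration rather than chosen locally and arbitrarily. The example in \cref{fig:UnfortunateTwistingWeight} shows that at every split one faces a free choice of how to distribute $\nu(C)+1$ between the two daughter circles, and that making this choice naively can produce deadlock further up the cube; the task is therefore to extract the right choice from the global geometry of $D$.

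The first step is to fix initial values, say $\nu(C)\coloneqq 0$ for every $C \in \pi_0(D_\varnothing)$. Conditions (b) and (c) of \cref{def:TwistingWeight} then propagate $\nu$ uniquely along any sequence of merge and spectator moves, and only at splits does a genuine choice remain. Existence of a twisting weight thus reduces to picking a system of choices at splits that is consistent along every path through the cube $2^{\mathcal X(D)}$.

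Since the cube is contractible, consistency along arbitrary paths follows from consistency on each $2$-face: for every pair of crossings $c,d\notin S$, the two paths $S \to S\cup\{c\} \to S\cup\{c,d\}$ and $S \to S\cup\{d\} \to S\cup\{c,d\}$ must yield the same assignment on $\pi_0(D_{S\cup\{c,d\}})$. I would then carry out a case analysis over the combinatorial types (split, merge, spectator) of the four edges of each square. Planarity restricts which types can coexist and how the daughter circles in the four corners are identified, which keeps the case analysis manageable.

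The main obstacle will be the split--split case, in which both edges out of $S$ are splits and both paths around the square involve free choices. Here the rule for distributing the $+1$ must be coordinated on the two sides, and this is the point at which the detailed analysis of circle configurations in the plane becomes essential. I expect the correct rule to be formulated in terms of a distinguished arc at each smoothed crossing, induced for instance from an auxiliary orientation of $D$, together with a tracking of how the distinguished arc is inherited by the daughter circles at each split; with such a rule in place, direct inspection of the possible planar patterns at each $2$-face should complete the verification.
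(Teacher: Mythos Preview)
Your proposal outlines a plausible strategy, but it has a genuine gap at precisely the point you flag as the main obstacle: you never actually specify the rule for distributing the $+1$ at a split. Writing that you ``expect the correct rule to be formulated in terms of a distinguished arc \dots\ induced for instance from an auxiliary orientation of $D$'' is a hope, not a proof. The trefoil example shows that the split--split $2$-face is genuinely delicate, and until you write down a concrete rule and verify it on all planar configurations of two splitting crossings, nothing has been established. It is not even clear that an orientation of $D$ yields a well-defined preferred daughter at every split in every state, since the circles of $D_S$ need not inherit coherent orientations from $D$; and even granting such a choice, the split--split squares include cases (two crossings on the same circle, interleaved or not) whose analysis you have not begun.

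For comparison, the paper does \emph{not} look for a local rule at splits. It first reduces, via crossing changes (\cref{lem:WeightCrossChange}), to diagrams with $D_\varnothing$ connected, and then builds $\nu_S$ by induction on $\lvert S\rvert$ using a structural dichotomy (\cref{lem:TargetCircle}): either every circle of $D_S$ is the unique outgoing boundary of some component of a cobordism $\xi^{S\setminus\{c\}}_c$, in which case $\nu_S$ is defined by pulling back along that component and shown to be independent of $c$ via \cref{lem:ExtWellDefined}; or there is a single ``hub'' circle $C_0$ adjacent to every crossing of $S$, and $\nu_S(C_0)$ is defined as a remainder using the invariant $\gamma$. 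Connectedness of $D_\varnothing$ is used through \cref{lem:NonGParallel} to exclude the all-parallel and triangle configurations that would otherwise obstruct the inductive step. If you want to pursue your approach, you will need to produce the missing rule and carry out the $2$-face check in full; otherwise the paper's argument gives a complete alternative.
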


\begin{remark}
As a special case, if the invertible element $\theta\in A$ belongs to the coefficient ring $\mathbb K$, then one can prove \cref{prop:LinkHomologyFrobeniusTwist} without taking a twisting weight.
Indeed, in this case, $\theta$ freely passes through the tensor products, so the two commutative squares in \cref{lem:TwistComparison} reduces to the following:
\begin{equation*}
\begin{tikzcd}[column sep=3.5em]
A^\theta \ar[r,"\theta^k\cdot\mathrm{id}"] \ar[d,"\Delta^\theta"'] & A \ar[d,"\Delta"] \\
A^\theta\otimes A^\theta \ar[r,"\theta^{k+1}\cdot\mathrm{id}"] & A\otimes A
\end{tikzcd}
\qquad
\begin{tikzcd}
A^\theta\otimes A^\theta \ar[r,"{\theta^k\cdot\mathrm{id}}"] \ar[d,"{\mu^\theta(=\mu)}"'] & A\otimes A \ar[d,"\mu"] \\
A^\theta \ar[r,"\theta^k\cdot\mathrm{id}"] & A
\end{tikzcd}
\end{equation*}
Hence, one can construct an isomorphism $V_{D;A^\theta}(S)\cong V_{D;A}(S)$ only by counting the number of comultiplication $\Delta$ in the path from the initial state $\varnothing$ to $S$.
\end{remark}

\section{Construction of twisting weight}\label{sec:Construction}

In order to construct a twisting weight on a link diagram, it is convenient to describe the structure of the cube $V_{D;A}$ in terms of \emph{cobordisms}.
Namely, if $S$ is a state on $D$, then for each $c\in\mathcal X(D)\setminus S$, we regard the structure morphism $\xi_c^S$ as a $2$-dimensional cobordism from $D_S$ to $D_{S\cup\{c\}}$ obtained by attaching a saddle around $c$.
It turns out that we still have $\xi_{c_1}\circ\xi_{c_2}=\xi_{c_2}\circ\xi_{c_1}$ as cobordisms; in other words, the family of cobordisms $\{\xi_c\}$ yields a commutative $\mathcal X(D)$-cube in the \emph{cobordism category}.
In fact, the commutative cube $V_{D;A}$ is obtained from this cube by applying the \emph{TQFT} associated with the Frobenius algebra $A$.

For a cobordism $W$, we write
\begin{equation*}
\gamma(W)\coloneqq
\frac{\lvert\pi_0(\boundout W)\rvert - \lvert\pi_0(\boundin W)\rvert - \chi(W)}{2}
.
\end{equation*}
The following is straightforward.

\pushQED\qed
\begin{lemma}\leavevmode
\label{lem:GammaAdditive}
\begin{enumerate}[label=\upshape(\arabic*)]
  \item $\gamma(W_2\circ W_1) = \gamma(W_2) + \gamma(W_1)$.
  \item $\gamma(W\amalg W') = \gamma(W) + \gamma(W')$.
\qedhere
\end{enumerate}
\end{lemma}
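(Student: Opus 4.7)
The plan is to unpack the definition
\[
\gamma(W) = \frac{\lvert\pi_0(\boundout W)\rvert - \lvert\pi_0(\boundin W)\rvert - \chi(W)}{2}
\]
and verify each clause directly from the additivity of $\chi$ and $\lvert\pi_0(-)\rvert$. Both parts are elementary once one notes that in our setting every boundary 1-manifold is a disjoint union of circles, hence closed with Euler characteristic $0$.

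For part (2), disjoint union is immediate: each of the three ingredients $\chi$, $\lvert\pi_0(\boundin -)\rvert$, and $\lvert\pi_0(\boundout -)\rvert$ is additive under disjoint union of cobordisms, so $2\gamma(W \amalg W')$ splits termwise into $2\gamma(W) + 2\gamma(W')$.

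For part (1), I would set $M \coloneqq \boundout W_1 = \boundin W_2$ and observe that $M$ is a closed $1$-manifold, so $\chi(M) = 0$. The inclusion--exclusion formula for Euler characteristic (equivalently Mayer--Vietoris applied to $W_2 \circ W_1 = W_1 \cup_M W_2$) then gives
\[
\chi(W_2 \circ W_1) = \chi(W_1) + \chi(W_2) - \chi(M) = \chi(W_1) + \chi(W_2).
\]
Since $\boundin(W_2 \circ W_1) = \boundin W_1$ and $\boundout(W_2 \circ W_1) = \boundout W_2$ by construction of the composition, writing out
\[
\gamma(W_1) + \gamma(W_2) = \tfrac{1}{2}\bigl(\lvert\pi_0(\boundout W_1)\rvert - \lvert\pi_0(\boundin W_1)\rvert - \chi(W_1)\bigr) + \tfrac{1}{2}\bigl(\lvert\pi_0(\boundout W_2)\rvert - \lvert\pi_0(\boundin W_2)\rvert - \chi(W_2)\bigr)
\]
one sees that the terms $+\lvert\pi_0(\boundout W_1)\rvert$ and $-\lvert\pi_0(\boundin W_2)\rvert$ both coincide with $\pm\lvert\pi_0(M)\rvert$ and cancel, leaving exactly $\gamma(W_2 \circ W_1)$.

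There is no genuine obstacle here: the only nontrivial input is the gluing formula for $\chi$, and it applies cleanly because we are gluing along a closed $1$-manifold with vanishing Euler characteristic. Both items are thus ``straightforward'' in the strict sense, which is presumably why the original statement carries an inline \qedhere.
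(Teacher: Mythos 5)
Your proof is correct and is exactly the argument the paper has in mind; the paper simply declares the lemma ``straightforward'' and gives no written proof (hence the inline \verb|\qedhere|). The two ingredients you supply — additivity of $\chi$ and $\lvert\pi_0(-)\rvert$ under disjoint union, and the gluing formula $\chi(W_2\circ W_1)=\chi(W_1)+\chi(W_2)$ (using $\chi(M)=0$ for the closed $1$-manifold $M$ along which one glues) together with the cancellation $\lvert\pi_0(\partial_{\mathsf{out}}W_1)\rvert=\lvert\pi_0(\partial_{\mathsf{in}}W_2)\rvert$ — are precisely what makes the claim immediate.
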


\begin{definition}
Let $W:Y_0\to Y_1$ be a $2$-dimensional cobordism.
A pair of functions $\nu_0:\pi_0(Y_0)\to\mathbb Z$ and $\nu_1:\pi_0(Y_1)\to\mathbb Z$ is said to be \emph{compatible with $W$} if, for each connected component $\Sigma\subset W$, we have
\begin{equation*}
\sum_{C\in\pi_0(\boundout\Sigma)} \nu_1(C) = \sum_{C\in\pi_0(\boundin\Sigma)} \nu_0(C) + \gamma(\Sigma).
\end{equation*}
\end{definition}

\begin{lemma}
\label{lem:CompositionCompatible}
Let $W_1:Y_0\to Y_1$ and $W_2:Y_1\to Y_2$ be $2$-dimensional cobordisms and $\nu_i:\pi_0(Y_i)\to\mathbb Z$.
\begin{enumerate}[label=\upshape(\arabic*)]
  \item\label{itm:CompositionCompatible:comp} If $(\nu_0,\nu_1)$ and $(\nu_1,\nu_2)$ are compatible with $W_1$ and $W_2$ respectively, then $(\nu_0,\nu_2)$ is compatible with $W_2\circ W_1$.
  \item\label{itm:CompositionCompatible:descend} If $\pi_0(\boundout W_1)\to \pi_0(W_1)$ is bijective, if $(\nu_0,\nu_1)$ is compatible with $W_1$, then $(\nu_1,\nu_2)$ is compatible with $W_2$ if and only if $(\nu_0,\nu_2)$ is compatible with $W_2\circ W_1$.
\end{enumerate}
\end{lemma}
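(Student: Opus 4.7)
The plan is to reduce both parts to a bookkeeping of components, boundary circles, and Euler characteristics. The key structural observation is that each connected component $\Sigma'$ of the composite $W_2\circ W_1$ arises from a finite collection $\{\Sigma_{1,i}\}_i\subset\pi_0(W_1)$ and $\{\Sigma_{2,j}\}_j\subset\pi_0(W_2)$ glued along a set of internal circles
\begin{equation*}
\mathcal C(\Sigma')\coloneqq\bigsqcup_i\pi_0(\boundout\Sigma_{1,i})=\bigsqcup_j\pi_0(\boundin\Sigma_{2,j})\subset\pi_0(Y_1),
\end{equation*}
while $\pi_0(\boundin\Sigma')=\bigsqcup_i\pi_0(\boundin\Sigma_{1,i})$ and $\pi_0(\boundout\Sigma')=\bigsqcup_j\pi_0(\boundout\Sigma_{2,j})$. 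Viewing $\Sigma'$ as the composition of the (possibly disconnected) cobordisms $\coprod_i\Sigma_{1,i}$ and $\coprod_j\Sigma_{2,j}$ and applying \cref{lem:GammaAdditive} twice yields
\begin{equation*}
\gamma(\Sigma')=\sum_i\gamma(\Sigma_{1,i})+\sum_j\gamma(\Sigma_{2,j}).
\end{equation*}

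For part~(1), I would simply sum the compatibility relations for each $\Sigma_{1,i}$ and each $\Sigma_{2,j}$. Every circle $C\in\mathcal C(\Sigma')$ appears once as an outgoing circle of some $\Sigma_{1,i}$ and once as an incoming circle of some $\Sigma_{2,j}$, so all $\nu_1$-terms cancel on summation, leaving precisely the compatibility identity for $\Sigma'$.

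For part~(2), the ``only if'' direction is an instance of part~(1). For the ``if'' direction, the hypothesis that $\pi_0(\boundout W_1)\to\pi_0(W_1)$ is bijective forces every component of $W_1$ to have exactly one outgoing circle. Fixing a component $\Sigma_2\subset W_2$ with incoming circles $C_1,\dots,C_m\in\pi_0(Y_1)$, each $C_k$ is then the unique outgoing circle of a unique component $\Sigma_{1,k}\subset W_1$, and the corresponding composite component is $\Sigma'=\Sigma_2\cup\bigcup_k\Sigma_{1,k}$ (no other $W_2$-component can enter $\Sigma'$, since each $C_k$ lies on the boundary of a single $W_2$-component). Subtracting from the compatibility equation for $\Sigma'$ (assumed) the compatibility equations for each $\Sigma_{1,k}$ (assumed) and using additivity of $\gamma$ recovers the compatibility equation for $\Sigma_2$.

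I do not anticipate any genuine obstacle: the whole argument is disciplined bookkeeping, with the analytic input packaged into \cref{lem:GammaAdditive} and into the bijectivity hypothesis. The only mild pitfall is to keep the ``internal'' circles of $Y_1$ (those in $\mathcal C(\Sigma')$, which cancel in the summation) cleanly separated from the ``external'' boundary circles of $\Sigma'$ that must be retained; once this accounting is explicit, both parts follow by a one-line calculation.
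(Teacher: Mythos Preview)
Your proposal is correct and follows essentially the same approach as the paper. The paper dispatches part~(1) as ``immediate from \cref{lem:GammaAdditive}'' (your bookkeeping spells this out), and for part~(2) it makes the equivalent observation that the bijectivity hypothesis forces $\pi_0(W_2)\to\pi_0(W_2\circ W_1)$ to be bijective---so each composite component has the form $\Sigma^{(2)}\circ(\coprod_i\Sigma^{(1)}_i)$---whereupon the same subtraction of the $W_1$-compatibility equations from the composite one yields the $W_2$-compatibility.
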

\begin{proof}
The part~\ref{itm:CompositionCompatible:comp} is immediate from \cref{lem:GammaAdditive}.
To see \ref{itm:CompositionCompatible:descend}, notice that, if $\pi_0(\boundout W_1)\to\pi_0(W_1)$ is bijective, then $\pi_0(W_2)\to\pi_0(W_2\circ W_1)$ is also bijective.
Thus, each connected component $\Sigma$ of $W_2\circ W_1$ is of the form $\Sigma^{(2)}\circ(\amalg^{(1)}_i \Sigma'_i)$ for connected components $\Sigma^{(2)}\in\pi_0(W_2)$ and $\Sigma^{(1)}_i\in\pi_0(W_1)$.
If $(\nu_0,\nu_1)$ is compatible with $W_1$, then we obtain
\begin{gather*}
\sum_{C\in\pi_0(\boundout\Sigma)} \nu_2(C)
= \sum_{C\in\pi_0(\boundout\Sigma^{(2)})}\nu_2(C)
,\\[1ex]
\begin{split}
\sum_{C\in\pi_0(\boundin\Sigma)} \nu_0(C)
&= \sum_i\sum_{C\in\pi_0(\boundin\Sigma^{(1)}_i)} \nu_0(C) \\
&= \sum_i\sum_{C'\in\pi_0(\boundout\Sigma^{(1)}_i)} \nu_1(C') - \gamma(\Sigma^{(1)}_i) \\
&= \sum_{C'\in\pi_0(\boundin\Sigma^{(2)})} \nu_1(C') - \gamma({\textstyle\coprod_i}\Sigma^{(1)}_i)
.
\end{split}
\end{gather*}
The result is now obvious.
\end{proof}

If $\Sigma$ is a cobordism with a single saddle, then we have
\begin{equation*}
\gamma(\Sigma)=\begin{cases*}
1 & if $\Sigma$ splits a circle,\\
0 & if $\Sigma$ merges a pair of circles.
\end{cases*}
\end{equation*}
This computation leads to the following.

\pushQED\qed
\begin{lemma}
\label{lem:WeightIffCompatible}
Let $D$ be a link diagram.
A set of functions $\{\nu_S:\pi_0(D_S)\to\mathbb Z\}_S$ forms a twisting weight if and only if each pair of the form $(\nu_S,\nu_{S\cup\{c\}})$ for $c\notin S$ is compatible with $\xi^S_c$.
\qedhere
\end{lemma}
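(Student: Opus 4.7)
The plan is to unpack both sides of the equivalence on a single cobordism $\xi^S_c$ and match the resulting identities against the clauses (a), (b), (c) of \cref{def:TwistingWeight}. Since compatibility is a condition imposed componentwise on the cobordism, the proof reduces to enumerating the connected components of $\xi^S_c$ and evaluating $\gamma$ on each.

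First I would note that the cobordism $\xi^S_c:D_S\to D_{S\cup\{c\}}$ decomposes into exactly one connected component $\Sigma_c$ containing the saddle at $c$, together with a disjoint union of cylinders, one for each circle of $D_S$ not adjacent to $c$. For a cylinder component $\Sigma\cong C\times[0,1]$, both boundary components consist of a single circle, and $\chi(\Sigma)=0$, so $\gamma(\Sigma)=0$. The compatibility condition on such a component reads $\nu_{S\cup\{c\}}(C')=\nu_S(C)$ where $C\in\pi_0(D_S)$ is identified with $C'\in\pi_0(D_{S\cup\{c\}})$; this is precisely condition (c).

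Next I would treat the saddle component $\Sigma_c$ using the $\gamma$-computation stated just before the lemma. If $c$ splits $C\in\pi_0(D_S)$ into $C'_1,C'_2\in\pi_0(D_{S\cup\{c\}})$, then $\gamma(\Sigma_c)=1$, and the compatibility equation $\nu_{S\cup\{c\}}(C'_1)+\nu_{S\cup\{c\}}(C'_2)=\nu_S(C)+1$ rearranges to condition (a). If instead $c$ merges $C_1,C_2$ into $C'$, then $\gamma(\Sigma_c)=0$, and compatibility reads $\nu_{S\cup\{c\}}(C')=\nu_S(C_1)+\nu_S(C_2)$, which is condition (b).

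Thus for a fixed pair $(S,c)$, the compatibility of $(\nu_S,\nu_{S\cup\{c\}})$ with $\xi^S_c$ is equivalent to the conjunction of the relevant instances of (a) or (b) (exactly one applies to $\Sigma_c$) together with (c) for each circle untouched by $c$. Quantifying over all $(S,c)$ reproduces the three conditions in \cref{def:TwistingWeight} in their entirety, proving both implications simultaneously. There is no substantive obstacle here; the lemma is a pure bookkeeping translation between the cobordism-theoretic formulation needed to invoke \cref{lem:CompositionCompatible} and the original combinatorial definition of a twisting weight.
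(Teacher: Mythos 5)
Your proposal is correct and matches the paper's intent: the paper in fact gives no written proof (the lemma is stated with a terminal QED mark, signalling that it follows directly from the single-saddle $\gamma$-computation displayed just above it), and your argument simply carries out the evident bookkeeping the paper leaves implicit. You correctly decompose $\xi^S_c$ into one saddle component plus cylinders, evaluate $\gamma$ as $0$, $1$, $0$ on cylinder, split, and merge respectively, and observe that the componentwise compatibility equations are verbatim conditions (c), (a), (b) of \cref{def:TwistingWeight}.
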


\begin{definition}
Let $D$ be a link diagram and $S$ a state.
\begin{enumerate}[label=\upshape(\arabic*)]
\item A pair of crossings $c_1$ and $c_2$ of $D$ are said to be \emph{disjoint} in $S$ if there is no circle in $D_S$ adjacent to both of them.
\item A pair of crossings $c_1$ and $c_2$ of $D$ are said to be \emph{parallel} in $S$ if they are adjacent to the same pair of distinct circles in $D_S$.
\item A triple of crossings $c_1$, $c_2$, and $c_3$ of $D$ are said to form a \emph{triangle} in $S$ if they are in the position as in \cref{fig:ParallelTriangle}.%
\end{enumerate}
\end{definition}
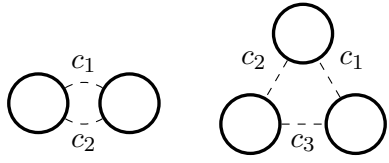
\begin{figure}
\setlength\belowdisplayskip{0pt}
\setlength\belowdisplayshortskip{0pt}
\begin{equation*}
\begin{tikzpicture}
\node[circle,draw,normal edge] (A) at (-.6,0) {\strut};
\node[circle,draw,normal edge] (B) at (.6,0) {\strut};
\draw[dashed]
    (A) to[out=30,in=150] node[above]{$c_1$} (B)
    (A) to[out=-30,in=210] node[below]{$c_2$} (B)
    ;
\end{tikzpicture}
\qquad
\begin{tikzpicture}
\node[circle,draw,normal edge] (A) at (90:.8) {\strut};
\node[circle,draw,normal edge] (B) at (210:.8) {\strut};
\node[circle,draw,normal edge] (C) at (330:.8) {\strut};
\draw[dashed] (A) -- node[above left]{$c_2$} (B) -- node[below]{$c_3$} (C) -- node[above right]{$c_1$} (A);
\end{tikzpicture}
\end{equation*}
\caption{Parallel crossings and triangle}
\label{fig:ParallelTriangle}
\end{figure}

\begin{lemma}
\label{lem:NonGParallel}
Let $D$ be a link diagram such that $D_\varnothing$ is connected.
Suppose $S$ is a state on $D$ such that, for every $c\in S$, $\xi_c:D_{S\setminus\{c\}}\to D_S$ splits a circle into two.
Then $S$ has at least one pair of crossings which are not parallel.
Furthermore, if there is no disjoint pair of crossings in $S$, then $S$ has no triangle.
\end{lemma}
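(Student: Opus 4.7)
The plan is to prove both statements by contradiction, reducing each to an Euler-characteristic computation on the saddle cobordism $W:D_S\to D_\varnothing$ obtained by applying the saddle at each $c\in S$.

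I would first reinterpret the hypothesis: the condition that $\xi_c:D_{S\setminus\{c\}}\to D_S$ is a split for every $c\in S$ is equivalent to saying that, in $D_S$, the two arcs of the smoothing at each $c\in S$ lie on two distinct circles of $D_S$. Associating to $S$ the multigraph $G_S$ whose vertices are the circles in $\pi_0(D_S)$ and whose edges are the crossings in $S$ (each edge joining the two incident circles), the hypothesis is the statement that $G_S$ has no self-loops. The two conclusions of the lemma become combinatorial statements about $G_S$: that it is not a multigraph on two vertices, and that, under the no-disjoint-pair assumption, it is triangle-free.

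Next I would prove the first conclusion by contradiction. Assume every pair of crossings in $S$ is parallel, so that all edges of $G_S$ share the same two endpoints $C_1,C_2\in\pi_0(D_S)$. Any other circle of $D_S$ is then untouched by every $c\in S$ and therefore survives as a separate component of $D_\varnothing$, so the connectedness assumption forces $D_S=C_1\sqcup C_2$. The cobordism $W$ is then two cylinders joined by $|S|$ planar bands; because all bands lie flatly in the plane at the locations of the respective crossings, $W$ is a connected, planar (genus-zero) surface with $\chi(W)=-|S|$. Substituting into $\chi(W)=2-b(W)$ gives $|\pi_0(D_\varnothing)|=|S|\geq 2$, contradicting $D_\varnothing$ being a single circle.

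For the second conclusion, I would first record the classical fact that a loopless multigraph in which every two edges share a vertex is either a star or has all its edges supported on a single triangle of three vertices (with multi-edges between the pairs permitted). The triangle hypothesis rules out the star case, so $G_S$ is supported on three distinguished vertices $C_a,C_b,C_c$, and as in part~(i) the connectedness of $D_\varnothing$ forces $D_S=C_a\sqcup C_b\sqcup C_c$. Running the same Euler-characteristic computation with three input circles now produces $|\pi_0(D_\varnothing)|=|S|-1\geq 2$, again contradicting connectedness.

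The main obstacle is to rigorously justify that the cobordism $W$ is a planar (genus-zero) surface in each case. I would argue this geometrically by identifying $W$ with the thickening of the planar ribbon graph $G_S$, whose cyclic order at each vertex is inherited from the planar embedding of $D$; such a ribbon surface is automatically of genus zero. An alternative, fully elementary route is to trace the circles of $D_\varnothing$ directly from the ribbon structure in each of the two configurations (parallel bands between two circles, respectively a triangle with multi-edges on three circles) and confirm the counts $|\pi_0(D_\varnothing)|=|S|$ and $|S|-1$ by hand.
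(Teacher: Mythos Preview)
Your argument is correct and takes a genuinely different route from the paper. The paper proceeds pictorially: in the all-parallel case it simply asserts, by inspecting the figure of two circles joined by $|S|\ge 2$ bands, that $D_\varnothing$ is disconnected; in the triangle case it locates an explicit small circle (the inner region of the triangle after undoing $c_1,c_2,c_3$) that no remaining crossing of $S$ can touch, and which therefore survives as a separate component of $D_\varnothing$. You instead compute $|\pi_0(D_\varnothing)|$ exactly from $\chi(W)=2-b(W)$, after reducing (in the triangle case) to a three-circle configuration via the classification of loopless multigraphs in which every two edges meet. This is more systematic and yields the precise counts $|\pi_0(D_\varnothing)|=|S|$ and $|S|-1$, at the cost of having to justify that $W$ has genus zero. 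Your first justification---planarity of the ribbon graph $G_S$ with the inherited cyclic orders---is correct but, as phrased, is close to what you are trying to prove; the cleanest formulation is that $W$ is homeomorphic to a regular neighborhood in $\mathbb R^2$ of the $4$-valent planar graph obtained from $D$ by $0$-resolving the crossings outside $S$ and keeping those in $S$ as transverse double points, a surface that visibly sits in the plane and has boundary exactly $D_S\sqcup D_\varnothing$. Your alternative of tracing the circles directly from the ribbon data essentially reproduces the paper's own argument.
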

\begin{proof}
If every pair of crossing in $S$ are mutually parallel, then the crossings in $S$ are depicted as in \cref{fig:GlobalParallel}.%
\begin{figure}
\setlength\belowdisplayskip{0pt}
\setlength\belowdisplayshortskip{0pt}
\begin{equation*}
\begin{tikzpicture}
\node[ellipse,draw,normal edge,inner xsep=7,inner ysep=20] (A) at (-.6,0) {};
\node[ellipse,draw,normal edge,inner xsep=7,inner ysep=20] (B) at (.6,0) {};
\pgftext[y=-2pt]{$\scriptstyle\vdots$}
\draw[dashed]
  (A.60) to[out=15,in=165] (B.120)
  (A.30) to[bend left] (B.150)
  (A.-60) to[out=-15,in=195] (B.240)
  ;
\end{tikzpicture}
\end{equation*}
\caption{All parallel crossings}
\label{fig:GlobalParallel}
\end{figure}
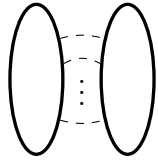
Since there are no other crossings, one can easily see that $D_\varnothing$ is not connected.

We next suppose that $S$ has no disjoint pair of crossings.
If $S$ admits a triangle, say $c_1$, $c_2$, and $c_3$ as in \cref{fig:ParallelTriangle}, then each circle in the picture cannot have crossings in $S$ in the ``interior'' of the circle.
Hence, the triangle becomes like in \cref{fig:TriangleInitial} in the state $S\setminus\{a,b,c\}$ such that no crossing in $S\setminus\{a,b,c\}$ intersects with the red triangle.
\begin{figure}
\setlength\belowdisplayskip{0pt}
\setlength\belowdisplayshortskip{0pt}
\begin{equation*}
\underset{\displaystyle\rule{0pt}{3ex}S}{\diagKnotTrefoil*[c_1,c_2,c_3]{h-smooth,h-smooth,h-smooth}}
\quad\leadsto\quad
\underset{\displaystyle\rule{0pt}{3ex}S\setminus\{a,b,c\}}{%
\begin{tikzpicture}
\diagKnotTrefoil*[c_1,c_2,c_3]{smooth,smooth,smooth}
\draw[red,rounded corners=10] (90:1.2) -- (210:1.2) -- (330:1.2) -- cycle;
\end{tikzpicture}
}
\end{equation*}
\caption{Triangle in the initial state $\varnothing$}
\label{fig:TriangleInitial}
\end{figure}
In particular, $D_\varnothing$ is not connected.
\end{proof}

\begin{lemma}
\label{lem:ExtWellDefined}
Let $S$ be a state on a link diagram $D$ and $a,b\in S$.
Suppose that we are given a functions
\begin{gather*}
\nu_{00}:\pi_0(D_{S\setminus\{a,b\}})\to\mathbb Z
,\\
\nu_{10}:\pi_0(D_{S\setminus\{b\}})\to\mathbb Z
,\\
\nu_{01}:\pi_0(D_{S\setminus\{a\}})\to\mathbb Z
\end{gather*}
such that the pairs $(\nu_{00},\nu_{10})$ and $(\nu_{00},\nu_{01})$ are compatible with $\xi^{S\setminus\{a,b\}}_a:D_{S\setminus\{a,b\}}\to D_{S\setminus\{b\}}$ and $\xi^{S\setminus\{a,b\}}_b:D_{S\setminus\{a,b\}}\to D_{S\setminus\{a\}}$ respectively.
For $C\in\pi_0(D_S)$, let $\Sigma_a\in\pi_0(\xi^{S\setminus\{a\}}_a)$ and $\Sigma_b\in\pi_0(\xi^{S\setminus\{b\}}_b)$ be the connected components containing $C$.
If $\boundout\Sigma_a=\boundout\Sigma_b=C$, then we have
\begin{equation*}
\sum_{C'\in\pi_0(\boundin\Sigma_b)} \nu_{10}(C')
= \sum_{C'\in\pi_0(\boundin\Sigma_a)} \nu_{01}(C')
.
\end{equation*}
\end{lemma}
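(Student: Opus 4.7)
The plan is to interpret both sides of the desired equality as compatibility computations on the composite cobordism $W\colon D_{S\setminus\{a,b\}}\to D_S$, which by commutativity of the cube admits the two factorizations
\[
W = \xi_a^{S\setminus\{a\}}\circ\xi_b^{S\setminus\{a,b\}} = \xi_b^{S\setminus\{b\}}\circ\xi_a^{S\setminus\{a,b\}}.
\]
Let $\Sigma$ be the connected component of $W$ containing $C$ on its outgoing boundary; under the first factorization $\Sigma$ contains $\Sigma_a$, and under the second it contains $\Sigma_b$.

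Because $\boundout\Sigma_a=\boundout\Sigma_b=\{C\}$, each of $\Sigma_a,\Sigma_b$ has a single outgoing circle, hence is either an identity cylinder or a merging saddle; in particular $\gamma(\Sigma_a)=\gamma(\Sigma_b)=0$. The crux is the structural claim $\boundout\Sigma=\{C\}$, equivalently that $\Sigma_a$ is the unique component of $\xi_a^{S\setminus\{a\}}$ lying in $\Sigma$ (and $\Sigma_b$ the unique component of $\xi_b^{S\setminus\{b\}}$), so that every component $T$ of $\xi_b^{S\setminus\{a,b\}}$ meeting $\Sigma$ satisfies $\boundout T\subset\boundin\Sigma_a$ and analogously for $\xi_a^{S\setminus\{a,b\}}$. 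I plan to prove this by contradiction: a violation would require a split-type saddle of $\xi_b^{S\setminus\{a,b\}}$ at $b$ with exactly one of its two outgoing circles lying in $\boundin\Sigma_a$, forcing the ``other'' circle to be carried back to $D_S$ by a cylinder of $\xi_a^{S\setminus\{a\}}$; tracing the same configuration through the mirror factorization shows that the two arcs meeting $b$ in $D_{S\setminus\{b\}}$ end up on a single circle, so $\xi_b^{S\setminus\{b\}}$ is itself a split saddle at $b$ whose two outgoing circles both lie in $\boundout\Sigma_b$, contradicting $\boundout\Sigma_b=\{C\}$.

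With this clean structure in hand, write
\[
\Sigma = \Sigma_a\circ\bigl(\textstyle\coprod_i T_i\bigr) = \Sigma_b\circ\bigl(\textstyle\coprod_j U_j\bigr),
\]
where the $T_i,U_j$ are the components of $\xi_b^{S\setminus\{a,b\}}$ and $\xi_a^{S\setminus\{a,b\}}$ covering $\boundin\Sigma_a$ and $\boundin\Sigma_b$ respectively. Summing the compatibility of $(\nu_{00},\nu_{01})$ with $\xi_b^{S\setminus\{a,b\}}$ over the $T_i$ and invoking \cref{lem:GammaAdditive} yields
\[
\sum_{C'\in\pi_0(\boundin\Sigma_a)}\nu_{01}(C') = \sum_{C'\in\pi_0(\boundin\Sigma)}\nu_{00}(C') + \sum_i\gamma(T_i) = \sum_{C'\in\pi_0(\boundin\Sigma)}\nu_{00}(C') + \gamma(\Sigma),
\]
using $\gamma(\Sigma_a)=0$. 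The symmetric computation through $\Sigma_b$ and the $U_j$'s produces the same right-hand side for $\sum_{C'\in\pi_0(\boundin\Sigma_b)}\nu_{10}(C')$, which establishes the claim.

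The main obstacle is the structural claim $\boundout\Sigma=\{C\}$, for which both halves of the hypothesis are genuinely used: without it, extra outgoing circles of $\Sigma$ would force values of $\nu_{01}$ or $\nu_{10}$ at circles unrelated to $\boundin\Sigma_a$ and $\boundin\Sigma_b$ into the bookkeeping, and no a priori relation between them is assumed.
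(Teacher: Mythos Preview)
Your overall strategy matches the paper's exactly: pass to the composite cobordism $W$, isolate the component $\widetilde\Sigma$ containing $C$, establish the structural claim $\boundout\widetilde\Sigma=\{C\}$, and then use $\gamma(\Sigma_a)=\gamma(\Sigma_b)=0$ together with additivity of $\gamma$ and the given compatibilities to identify both sides with $\sum_{C''\in\pi_0(\boundin\widetilde\Sigma)}\nu_{00}(C'')+\gamma(\widetilde\Sigma)$.

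The only substantive difference is how you justify $\boundout\widetilde\Sigma=\{C\}$. The paper argues by cases: if one of $a,b$ is not adjacent to $C$ in $D_S$ then $\widetilde\Sigma$ coincides with $\Sigma_a$ or $\Sigma_b$; if both are adjacent it enumerates the two possible local configurations of a pair of ``merging'' crossings at $C$ and checks each directly. Your contradiction argument is a valid alternative and arguably more uniform, but one phrase is inaccurate: you assert that the ``other'' outgoing circle $E_2$ of the split $b$-saddle is carried to $D_S$ \emph{by a cylinder} of $\xi_a^{S\setminus\{a\}}$. This need not hold---when $\Sigma_a$ itself is a cylinder, the component receiving $E_2$ may well be the $a$-saddle $P_a$. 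What you actually need (and what does hold) is weaker: the arc of $b$ lying on $E_2$ ends up, after $\xi_a^{S\setminus\{a\}}$, on some circle in $\boundout\Sigma_a'$, hence on a circle $C'\neq C$ since $\Sigma_a'\neq\Sigma_a$. Then both $b$-arcs in $D_S$ lie on distinct circles $C,C'$, so the $b$-saddle in $\xi_b^{S\setminus\{b\}}$ is a split with $\boundout P_b'=\{C,C'\}$, forcing $\Sigma_b=P_b'$ and contradicting $\boundout\Sigma_b=\{C\}$. With that correction your argument goes through.
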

\begin{proof}
We denote by $\widetilde\Sigma\subset\xi^{S\setminus\{b\}}_b\circ\xi^{S\setminus\{a,b\}}_a=\xi^{S\setminus\{a\}}_a\circ\xi^{S\setminus\{a,b\}}_b$ the connected component containing $C$.
We assert that $\boundout\widetilde\Sigma=C$.
Indeed, if either of $a$ and $b$ is not adjacent to $C$ in $S$, then $\widetilde\Sigma$ is identified either of $\Sigma_a$ or $\Sigma_b$, so the assertion is obvious.
If both $a$ and $b$ are adjacent to $C$ in $S$, then the possible configurations of these crossings are as in \cref{fig:MergeConfig}.%
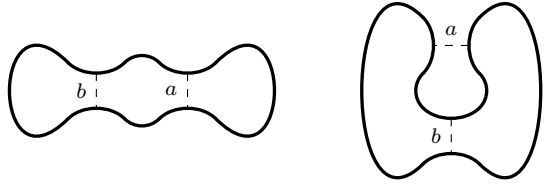
\begin{figure}
\setlength\belowdisplayskip{0pt}
\setlength\belowdisplayshortskip{0pt}
\begin{equation*}
\begin{tikzpicture}[baseline=-.5ex]
\useasboundingbox (-2,-.7) rectangle (2,.7);
\node[crossing,cross type=h-smooth,cross angle=90,inner sep=15pt] (B) at (-.6,0) {};
\node[crossing,cross type=h-smooth,cross angle=90,inner sep=15pt] (A) at (.6,0) {};
\draw[normal edge]
  (B.out left) to[out=135,in=225,looseness=5] (B.in left)
  (B.out right) to[out=45,in=135] (A.out left)
  (A.out right) to[out=45,in=315,looseness=5] (A.in right)
  (A.in left) to[out=225,in=315] (B.in right)
  ;
\draw[dashed]
  (A.north) -- node[left]{$\scriptstyle a$}(A.south)
  (B.north) -- node[left]{$\scriptstyle b$}(B.south)
  ;
\end{tikzpicture}
\qquad
\begin{tikzpicture}[baseline=-.5ex]
\useasboundingbox (-1.3,-1.3) rectangle (1.3,1.3);
\node[crossing,cross type=smooth,cross angle=90,inner sep=15pt] (A) at (0,.6) {};
\node[crossing,cross type=h-smooth,cross angle=90,inner sep=15pt] (B) at (0,-.6) {};
\draw[normal edge]
  (A.out left) to[out=135,in=225,looseness=2] (B.in left)
  (B.out left) to[out=135,in=225] (A.in left)
  (A.out right) to[out=45,in=315,looseness=2] (B.in right)
  (B.out right) to[out=45,in=315] (A.in right)
  ;
\draw[dashed]
  (A.east) -- node[above]{$\scriptstyle a$}(A.west)
  (B.north) -- node[left]{$\scriptstyle b$}(B.south)
  ;
\end{tikzpicture}
\end{equation*}
\caption{Configuration of ``merging'' crossings}
\label{fig:MergeConfig}
\end{figure}
In both cases, the assertion is verified by the direct computation.
Notice that, by virtue of the assertion, we can write $\widetilde\Sigma=\Sigma_b\circ W_a = \Sigma_a\circ W_b$ for $W_a\subset\xi^{S\setminus\{a,b\}}_a$ and $W_b\subset\xi^{S\setminus\{a,b\}}_b$.
Since $\gamma(\Sigma_a)=\gamma(\Sigma_b)=0$, we have $\gamma(W_a)=\gamma(W_b)=\gamma(\widetilde\Sigma)$.
Now, since $(\nu_{00},\nu_{10})$ and $(\nu_{00},\nu_{01})$ are compatible with $\xi^{S\setminus\{a,b\}}_a$ and $\xi^{S\setminus\{a,b\}}_b$ respectively, we have
\begin{equation*}
\sum_{C'\in\pi_0(\boundin\Sigma_b)} \nu_{10}(C')
= \sum_{C''\in\pi_0(\boundin\widetilde\Sigma)} \nu_{00}(C'') + \gamma(\widetilde\Sigma)
= \sum_{C'\in\pi_0(\boundin\Sigma_a)} \nu_{01}(C')
,
\end{equation*}
which completes the proof.
\end{proof}

\begin{lemma}
\label{lem:TargetCircle}
Let $D$ be a link diagram.
For every state $S$, exactly one of the following holds:
\begin{enumerate}[label=\upshape(\arabic*)]
  \item for each circle component $C\in\pi_0(D_S)$, there is a crossing $c\in S$ such that $\xi^{S\setminus\{c\}}_c:D_{S\setminus\{c\}}\to D_S$ contains a connected component $\Sigma$ with $\boundout\Sigma=C$;
  \item there is a circle component $C_0\in\pi_0(D_S)$ such that every crossing $c\in S$ is adjacent to $C_0$ and one other circle in $D_S$.
\end{enumerate}
\end{lemma}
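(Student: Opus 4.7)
The plan is to show that conditions (1) and (2) are logical negations of one another, so exactly one holds. The key step is to pin down, for each $C\in\pi_0(D_S)$ and each $c\in S$, precisely when the cobordism $\xi^{S\setminus\{c\}}_c$ contains a connected component whose outgoing boundary equals $C$ alone.

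First I would describe $\xi^{S\setminus\{c\}}_c$ componentwise: it is the disjoint union of a single ``saddle component'' at $c$ (a pair of pants) together with cylinders over all circles of $D_S$ that are not adjacent to $c$. A standard Euler-characteristic count shows that this saddle component is a merge (two input circles, one output) exactly when $c$ is adjacent to a unique circle of $D_S$, and a split (one input, two outputs) exactly when $c$ is adjacent to two distinct circles of $D_S$. It then follows that a connected component $\Sigma\subset\xi^{S\setminus\{c\}}_c$ with $\boundout\Sigma=C$ (a single circle) exists for this $c$ if and only if either $c$ is not adjacent to $C$ in $D_S$, in which case $\Sigma$ is the cylinder over $C$, or $c$ is adjacent to $C$ alone in $D_S$, in which case $\Sigma$ is the merging saddle whose output circle is $C$.

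From this local analysis, a circle $C\in\pi_0(D_S)$ fails the existence clause of (1) if and only if every $c\in S$ is adjacent both to $C$ and to some other circle in $D_S$; but this is exactly the property required of $C_0$ in (2). Hence (1) fails if and only if (2) holds, with any circle witnessing the failure of (1) serving as $C_0$ in (2), and the dichotomy follows. The main---and essentially only---obstacle is the merge/split-versus-adjacency dictionary at a single crossing, which is a standard fact about how the two smoothings pair up the four local arcs; the remainder is bookkeeping about components and boundaries of the saddle cobordism.
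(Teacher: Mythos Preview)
Your proposal is correct and follows essentially the same approach as the paper: both argue by analyzing the connected components of $\xi^{S\setminus\{c\}}_c$ (cylinders versus the saddle) to conclude that a circle $C$ fails to be a single outgoing boundary for every $c$ precisely when every $c$ is adjacent to $C$ together with one other circle. Your write-up is slightly more explicit than the paper's about the full biconditional (hence the ``exactly one''), but the underlying idea is identical.
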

\begin{proof}
If there is a crossing $c\in S$ such that $\xi^{S\setminus\{c\}}_c:D_{S\setminus\{c\}}\to D_S$ merges circles, then every circle in $D_S$ is a unique target of a connected component of $\xi^{S\setminus\{c\}}_c$.
Also, every circle in $D_S$ which is not adjacent to $c$ is the unique target of a connected component of $\xi^{S\setminus c}_c:D_{S\setminus\{c\}}\to D_S$.
Therefore, if $D_S$ has a circle which is not a unique target of any connected components of $\xi^{S\setminus c}_c$ for any crossings $c$, then every such circle must be adjacent to all the crossings.
\end{proof}

\begin{proposition}
\label{prop:WeightConnectedInit}
If $D$ is a link diagram such that $D_\varnothing$ is connected, then $D$ admits a twisting weight.
\end{proposition}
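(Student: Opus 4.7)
The plan is to proceed by induction on $|S|$, constructing $\nu_S$ on each $\pi_0(D_S)$ so that, by \cref{lem:WeightIffCompatible}, it suffices to ensure compatibility of $(\nu_{S\setminus\{c\}},\nu_S)$ with $\xi^{S\setminus\{c\}}_c$ for every $c\in S$. For the base case $S=\varnothing$, I set $\nu_\varnothing$ to send the unique circle of $D_\varnothing$ to $0$. For the inductive step, I apply \cref{lem:TargetCircle} to $S$ and handle its two alternatives separately.

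In Case~1, every circle $C\in\pi_0(D_S)$ is the unique outgoing boundary of some connected component $\Sigma_c$ of $\xi^{S\setminus\{c\}}_c$ for some $c\in S$, and compatibility forces
\[
\nu_S(C) \coloneqq \sum_{C''\in\pi_0(\boundin\Sigma_c)} \nu_{S\setminus\{c\}}(C'') + \gamma(\Sigma_c).
\]
By \cref{lem:ExtWellDefined} this value is independent of the choice of $c$. The compatibility of $(\nu_{S\setminus\{c'\}},\nu_S)$ with $\xi^{S\setminus\{c'\}}_{c'}$ for each $c'\in S$ is then immediate on cylinder and merge components; for a split component at $c'$ producing outgoing circles $C'_1,C'_2$, I pick witnessing crossings $c_1,c_2\in S\setminus\{c'\}$ for $C'_1$ and $C'_2$ and combine the inductive compatibilities along the squares $\xi_{c'}\xi_{c_i}=\xi_{c_i}\xi_{c'}$ via \cref{lem:CompositionCompatible} to recover the split identity $\nu_S(C'_1)+\nu_S(C'_2)=\nu_{S\setminus\{c'\}}(\widetilde{C})+1$.

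In Case~2 there is a distinguished circle $C_0$ adjacent to every $c\in S$, and each $c\in S$ splits some $\widetilde{C}^{(c)}\in\pi_0(D_{S\setminus\{c\}})$ into $C_0$ and a companion circle $C^{(c)}\in\pi_0(D_S)$. For each $C\in\pi_0(D_S)\setminus\{C_0\}$, \cref{lem:NonGParallel} together with the connectedness of $D_\varnothing$ rules out the possibility that every $c\in S$ is adjacent to $C$---that would force all of $S$ to be mutually parallel to the pair $\{C_0,C\}$---so some $c'\in S$ contributes a cylinder component at $C$ and forces $\nu_S(C)=\nu_{S\setminus\{c'\}}(C)$, well-defined in $c'$ by \cref{lem:ExtWellDefined}. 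The remaining value $\nu_S(C_0)$ is then determined by any single split constraint $\nu_S(C_0)=\nu_{S\setminus\{c\}}(\widetilde{C}^{(c)})+1-\nu_S(C^{(c)})$ with $c\in S$.

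The principal obstacle is to show that this prescription for $\nu_S(C_0)$ does not depend on the choice of $c\in S$; this is exactly the deadlock exhibited by the ``unfortunate choice'' in the Introduction. For two crossings $c_a,c_b\in S$, I descend to the common substate $S\setminus\{c_a,c_b\}$ via the commutative square $\xi_{c_a}\xi_{c_b}=\xi_{c_b}\xi_{c_a}$ and extract the required identity from the inductive compatibilities at the two edges emanating from $D_{S\setminus\{c_a,c_b\}}$, together with \cref{lem:CompositionCompatible}. When $c_a$ and $c_b$ are non-parallel in $S$, both crossings remain splits after restriction, and subtracting the two resulting split equations yields the identity directly. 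The parallel case is subtler: the crossing $c_a$ becomes a merge in $\xi^{S\setminus\{c_a,c_b\}}_{c_a}$, and the identity reduces to checking that the two pairs of merged circles in $D_{S\setminus\{c_a,c_b\}}$ (one pair for $c_a$, one for $c_b$) coincide. The combinatorial constraints supplied by \cref{lem:NonGParallel}---no triangle of crossings in $S$ and no totally-parallel configuration---will be the key inputs controlling this matching.
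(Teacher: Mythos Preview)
Your inductive skeleton and the Case~1/Case~2 dichotomy via \cref{lem:TargetCircle} match the paper exactly, and your definition of $\nu_S$ in Case~1 agrees with the paper's (your extra $\gamma(\Sigma_c)$ term vanishes since $\boundout\Sigma_c$ is a single circle). There is, however, a genuine gap in Case~1 and a genuine divergence in Case~2.

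\textbf{Case~1, split verification.} Your one-line appeal ``combine the inductive compatibilities along the squares $\xi_{c'}\xi_{c_i}=\xi_{c_i}\xi_{c'}$ via \cref{lem:CompositionCompatible}'' does not close the argument. The descent direction of \cref{lem:CompositionCompatible}\ref{itm:CompositionCompatible:descend} is only available when the first-layer cobordism has $\pi_0(\boundout)\to\pi_0$ bijective, i.e.\ contains no split. The paper accordingly splits the verification into three subcases: (i) some $c''\in S$ is disjoint from $c'$; (ii) some $c''\in S$ merges; (iii) neither. Subcases (i)--(ii) are handled as you suggest, but in (iii) every crossing in $S$ splits and none is disjoint from $c'$. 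Here the witnesses $c_1,c_2$ you pick are forced to sit in a four-circle chain $\widetilde C_1\text{--}c_1\text{--}C'_2\text{--}c'\text{--}C'_1\text{--}c_2\text{--}\widetilde C_2$, and the split identity is obtained only after an explicit descent to $S\setminus\{c',c_1,c_2\}$ and a six-equation bookkeeping computation. The ``no triangle'' half of \cref{lem:NonGParallel} is precisely what guarantees this chain is linear (the four circles are pairwise distinct); that hypothesis is consumed here, in Case~1, not in Case~2 where you placed it.

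\textbf{Case~2, the value $\nu_S(C_0)$.} Your approach is different from the paper's. You fix $\nu_S(C_0)$ by a single split constraint and then argue independence of the chosen $c$ via a parallel/non-parallel case analysis. This can be completed---your claim that parallel $c_a,c_b$ become merges at the level $S\setminus\{c_a,c_b\}$ is correct (the connected component they span is a $4$-holed sphere by an Euler-characteristic count), and both merges join the same pair of in-circles---but it remains only a sketch. The paper bypasses the independence question entirely by defining
\[
\nu_S(C_0)\coloneqq\gamma(W_S)-\sum_{C\neq C_0}\nu_S(C),
\]
where $W_S$ is the total cobordism $D_\varnothing\to D_S$. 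Since $D_\varnothing$ is connected, $W_{S\setminus\{c\}}$ is connected, and compatibility of $(\nu_\varnothing,\nu_{S\setminus\{c\}})$ with $W_{S\setminus\{c\}}$ is a single equation relating $\gamma(W_{S\setminus\{c\}})$ to $\sum_C\nu_{S\setminus\{c\}}(C)$; a two-line telescoping then yields compatibility with each $\xi^{S\setminus\{c\}}_c$. This global formula is both shorter and makes transparent where the hypothesis ``$D_\varnothing$ connected'' is actually spent.
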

\begin{proof}
In view of \cref{lem:WeightIffCompatible}, we construct a function $\nu_S:\pi_0(D_S)\to\mathbb Z$ for each state $S$ by induction on the cardinality $\lvert S\rvert$ so that $(\nu_S,\nu_{S\cup\{c\}})$ is compatible with the cobordism $\xi^S_c$ for each pair of a state $S$ and a crossing $c$ with $c\notin S$.
We set $\nu_\varnothing\equiv 0$.
For each crossing $c$, take $\nu_{\{c\}}:\pi_0(D_{\{c\}})\to\mathbb Z$ arbitrarily so that $(\nu_\varnothing,\nu_{\{c\}})$ is compatible with $\xi^\varnothing_c$.
For $S$ with $\lvert S\rvert\ge 2$, we proceed the induction by splitting it to the following two cases in view of \cref{lem:TargetCircle}.

First, suppose that every circle $C\in\pi_0(D_S)$ admits a crossing $c \in S$ such that $\xi^{S\setminus\{c\}}_c$ contains a connected component $\Sigma_C$ with $\boundout\Sigma_C=C$.
We define $\nu(C)\in\mathbb Z$ by
\begin{equation}\label{eq:DefnuS}
\nu_S(C)\coloneqq\sum_{C'\in\pi_0(\boundin\Sigma_C)} \nu_{S\setminus\{c\}}(C')
.
\end{equation}
Note that, by virtue of \cref{lem:ExtWellDefined}, $\nu(C)$ does not depend on the choice of $c$.
We show that $(\nu_{S\setminus\{c\}},\nu_S)$ is compatible with $\xi^{S\setminus\{c\}}_c$ for each $c\in S$.
If $c$ merges circles, then the assertion directly follows from the independence mentioned above.
Suppose $c$ splits a circle $C'\in\pi_0(D_{S\setminus\{c\}})$ into $C_1,C_2\in\pi_0(D_S)$.
If a connected component $\Sigma\subset\xi^{S\setminus\{c\}}_c$ is not adjacent to $c$, then $\Sigma$ is by definition the identity cobordism,
and hence $\nu_S(\boundout\Sigma)=\nu_{S\setminus\{c\}}(\boundin\Sigma)$ by definition of $\nu_S$.
On the other hand, let us denote by $\Sigma_c$ the connected component of $\xi^{S\setminus\{c\}}_c$ containing the saddle point.
If there is a crossing $c'\in S$ disjoint to $c$, then the circles $C'$, $C_1$, and $C_2$ are identified with those in $\pi_0(D_{S\setminus\{c,c'\}})$ and $\pi_0(D_{S\setminus \{c'\}})$ respectively.
Similarly, $\Sigma_c$ is also seen as a connected component of the cobordism $\xi^{S\setminus\{c,c'\}}_c$.
The induction hypothesis then implies
\begin{equation*}
\nu_S(C_1)+\nu_S(C_2)
= \nu_{S\setminus\{c'\}}(C_1) + \nu_{S\setminus\{c'\}}(C_2)
= \nu_{S\setminus\{c,c'\}}(C') + 1
= \nu_{S\setminus\{c\}}(C') + 1
.
\end{equation*}
Hence $(\nu_{S\setminus\{c\}},\nu_S)$ is compatible with $\xi^{S\setminus\{c\}}_c$ in this case.
Also, if there is a crossing $c'\in S$ which merges circles, then it turns out that the cobordism $\xi^{S\setminus\{c,c'\}}_{c'}:D_{S\setminus\{c,c'\}}\to D_{S\setminus\{c\}}$ also merges circles.
According to \cref{lem:CompositionCompatible}, in this case, $(\nu_{S\setminus\{c\}},\nu_S)$ is compatible with $\xi^{S\setminus\{c\}}_c$ if and only if $(\nu_{S\setminus\{c,c'\}},\nu_S)$ is compatible with the composition $\xi^{S\setminus\{c\}}_c\circ\xi^{S\setminus\{c,c'\}}_{c'}=\xi^{S\setminus\{c'\}}_{c'}\circ\xi^{S\setminus\{c,c'\}}_c$.
By induction hypothesis, $(\nu_{S\setminus\{c,c'\}},\nu_{S\setminus\{c'\}})$ is compatible with $\xi^{S\setminus\{c,c'\}}_c$ while $(\nu_{S\setminus\{c'\}},\nu_S)$ is compatible with $\xi^{S\setminus\{c'\}}_{c'}$ by definition.
Hence the assertion is verified.
Suppose now that $c$ admits no disjoint crossings in $S$ and that all the crossings cause splitting of circles.
Take crossings $c_i\in S$ for $i=1,2$ so that $c_i$ is adjacent only to the circle $C_i$.
By virtue of \cref{lem:NonGParallel}, the configuration of $c$, $c_1$, and $c_2$ in $D_S$ must be as in \cref{fig:CrossLinearConfig}.%
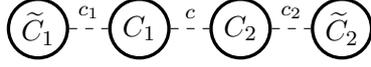
\begin{figure}
\setlength\belowdisplayskip{0pt}
\setlength\belowdisplayshortskip{0pt}
\begin{equation*}
\begin{tikzpicture}
\node[normal edge,circle,draw] (C1) at (-.666,0) {\strut};
\node[normal edge,circle,draw] (C2) at (.666,0) {\strut};
\node[normal edge,circle,draw] (A) at (-2,0) {\strut};
\node[normal edge,circle,draw] (B) at (2,0) {\strut};
\pgftext[at={\pgfpointanchor{C1}{center}}]{$C_1$}
\pgftext[at={\pgfpointanchor{C2}{center}}]{$C_2$}
\pgftext[at={\pgfpointanchor{A}{center}}]{$\widetilde C_1$}
\pgftext[at={\pgfpointanchor{B}{center}}]{$\widetilde C_2$}
\draw[dashed]
  (A)
  -- node[above]{$\scriptstyle c_1$} (C1)
  -- node[above]{$\scriptstyle c$} (C2)
  -- node[above]{$\scriptstyle c_2$} (B)
  ;
\end{tikzpicture}
\end{equation*}
\caption{Linear configuration of three crossings}
\label{fig:CrossLinearConfig}
\end{figure}
Let $\widetilde C_1$ and $\widetilde C_2$ be circles as in \cref{fig:CrossLinearConfig}.
We also write $\widetilde C'_i$ for $i=1,2$ for the circle in $D_{S\setminus\{c_i\}}$ which splits into $\widetilde C_i$ and $C_i$.
By induction hypothesis, we then have
\begin{gather*}
\begin{aligned}
\nu_{S\setminus\{c\}}(\widetilde C_1) &=\nu_{S\setminus\{c_2\}}(\widetilde C_1)
,\quad&
\nu_{S\setminus\{c\}}(\widetilde C_2) &=\nu_{S\setminus\{c_1\}}(\widetilde C_2)
,\\
\nu_S(C_1) &=\nu_{S\setminus\{c_2\}}(C_1)
,\quad&
\nu_S(C_2) &=\nu_{S\setminus\{c_1\}}(C_2)
,
\end{aligned}
\\
\nu_{S\setminus\{c_2\}}(\widetilde C_1)+\nu_{S\setminus\{c_2\}}(C_1)
= \nu_{S\setminus\{c_1,c_2\}}(\widetilde C'_1) + 1
,\\
\nu_{S\setminus\{c_1\}}(C_2)+\nu_{S\setminus\{c_1\}}(\widetilde C_2)
= \nu_{S\setminus\{c_1,c_2\}}(\widetilde C'_2) + 1
,\\
\nu_{S\setminus\{c_1,c_2\}}(\widetilde C'_1) + \nu_{S\setminus\{c_1,c_2\}}(\widetilde C'_2)
= \nu_{S\setminus\{c,c_1,c_2\}}(\widetilde C') + 1
,
\end{gather*}
where $\widetilde C'$ is the circle in $D_{S\setminus\{c,c_1,c_2\}}$ formed by the four circles.
In addition, the induction hypothesis also implies that $(\nu_{S\setminus\{c,c_1,c_2\}},\nu_{S\setminus\{c\}})$ is compatible with the composition of the cobordisms $\xi^{S\setminus\{c,c_1\}}_{c_1}\circ\xi^{S\setminus\{c,c_1,c_2\}}_{c_2}$, so
\begin{equation*}
\nu_{S\setminus\{c\}}(\widetilde C_1) + \nu_{S\setminus\{c\}}(C') + \nu_{S\setminus\{c\}}(\widetilde C_2)
= \nu_{S\setminus\{c,c_1,c_2\}}(\widetilde C') + 2
.
\end{equation*}
Combining the equations above, one obtains $\nu_S(C_1)+\nu_S(C_2)=\nu_{S\setminus\{c\}}(C')+1$.
This proves that $(\nu_S,\nu_{S\setminus\{c\}})$ is compatible with $\xi^{S\setminus\{c\}}_c$.

Second, suppose that there is a circle $C_0$ in $D_S$ such that every crossing $c\in S$ is adjacent to $C_0$ and a one other circle.
Note that if there is another such circle, then every crossing in $S$ is adjacent to both of the two circles, which is impossible by virtue of \cref{lem:NonGParallel}.
It follows that every circle $C$ other than $C_0$ admits a crossing $c$ which is not adjacent to $C$; in this case, we may identify $C$ with the corresponding circle in $\pi_0(D_{S\setminus\{c\}})$.
We then define $\nu_S(C)$ by
\begin{equation*}
\nu_S(C)\coloneqq
\begin{cases*}
\gamma(W_S) - \sum_{C\neq C_0}\nu_S(C) & if $C=C_0$,\\
\nu_{S\setminus\{c\}}(C) & if $C\neq C_0$ and if $C$ is not adjacent to $c$,
\end{cases*}
\end{equation*}
where $W_S$ is the cobordism obtained by composing the chain of cobordisms connecting $D_\varnothing$ to $D_S$.
We verify that $(\nu_{S\setminus\{c\}},\nu_S)$ is compatible with the cobordism $\xi^{S\setminus\{c\}}_s:D_{S\setminus\{c\}}\to D_S$ for each crossing $c$.
For a connected component $\Sigma\subset\xi^{S\setminus\{c\}}_c$, if $\Sigma$ is not adjacent to $c$, then $\Sigma$ is the identity cobordism so that $\nu_S(\boundout\Sigma)=\nu_{S\setminus\{c\}}(\boundin\Sigma)$.
Otherwise, $\Sigma$ is the saddle cobordism from $C' $ to $C_0\amalg C_1$ for $C'\in\pi_0(D_{S\setminus\{c\}})$ and $C_1\in\pi_0(D_S)$ with $C_1\neq C_0$.
Thus, we have
\begin{equation*}
\begin{split}
\nu_S(C_0)+\nu_S(C_1)
&= \gamma(W_S) - \sum_{C\in\pi_0(D_S)\setminus\{C_0,C_1\}} \nu_S(C) \\[1ex]
&= \gamma(\xi^{S\setminus\{c\}}_c) + \gamma(W_{S\setminus\{c\}}) - \sum_{C\in\pi_0(D_{S\setminus\{c\}})\setminus\{C'\}} \nu_{S\setminus\{c\}}(C) \\[1ex]
&= \gamma(\xi^{S\setminus\{c\}}_c) + \nu_{S\setminus\{c\}}(C') + \sum_{C''\in\pi_0(\boundin W_{S\setminus\{c\}})} \nu_\varnothing(C'') \\[1ex]
&= \gamma(\xi^{S\setminus\{c\}}_c) + \nu_{S\setminus\{c\}}(C')
,
\end{split}
\end{equation*}
which completes the proof.
\end{proof}

The general case reduces to the case of \cref{prop:WeightConnectedInit} by virtue of the following result.

\begin{lemma}
\label{lem:WeightCrossChange}
Suppose $D_+$ and $D_-$ are link diagrams which differ from one another only by the type of a single crossing.
Then $D_+$ admits a twisting weight if and only if $D_-$ does.
\end{lemma}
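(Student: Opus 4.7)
My plan is to transfer a twisting weight from $D_+$ to $D_-$ via the natural involution $\phi\colon S\mapsto S\triangle\{c_0\}$ on states, which identifies the plane curves $D_{+,S}=D_{-,\phi(S)}$ and hence identifies $\pi_0(D_{+,S})$ with $\pi_0(D_{-,\phi(S)})$. Under $\phi$ the structure cobordisms at every crossing $c\neq c_0$ coincide verbatim, but the cobordism at $c_0$ in $D_-$ is the orientation reversal of the one in $D_+$: a splitting saddle on one side becomes a merging saddle on the other. Consequently, if $\nu_+$ is a twisting weight on $D_+$, the naive pullback $\nu_+\circ\phi$ satisfies the compatibility criterion of \cref{lem:WeightIffCompatible} at every non-$c_0$ cube edge, while a direct computation using $\gamma(\Sigma)+\gamma(\Sigma^{\mathrm{rev}})=-\chi(\Sigma)=1$ reveals that it fails at each $c_0$ saddle component by exactly $1$.

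To fix this defect, I would look for a correction $\Delta\colon\coprod_T\pi_0(D_{-,T})\to\mathbb Z$ such that $\nu_-\coloneqq\nu_+\circ\phi+\Delta$ satisfies Definition \ref{def:TwistingWeight} on $D_-$. Tracing the conditions, one sees that $\Delta$ must be additive on splits and merges and invariant on identifications across every non-$c_0$ cobordism component and every identification component of a $c_0$-cobordism, while at each $c_0$ saddle component $\Sigma_-$ in $D_-$ it must satisfy
\[
\sum_{C\in\pi_0(\boundout\Sigma_-)}\Delta(C)-\sum_{C\in\pi_0(\boundin\Sigma_-)}\Delta(C)=1.
\]
Equivalently, $\Delta$ is a compatibility datum for the modified invariant $\gamma'$ that vanishes off the $c_0$-axis and equals $1$ on each $c_0$ saddle component; $\gamma'$ remains additive under composition of cobordisms, so the analogue of \cref{lem:GammaAdditive} continues to hold.

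The existence of such a $\Delta$ is then established by mimicking the inductive construction of \cref{prop:WeightConnectedInit} with $\gamma'$ in place of $\gamma$: the configuration lemmas \cref{lem:NonGParallel,lem:TargetCircle}, the well-definedness criterion \cref{lem:ExtWellDefined}, and the case analysis between the ``target-circle'' case and the ``central-circle'' case all depend only on the additivity of the invariant and on the combinatorial structure of the cube, both of which transfer. The main obstacle I anticipate is that $\gamma'$ is not local to the cobordism component but is concentrated along the $c_0$-axis, so the inductive step for states meeting a $c_0$-edge must be propagated with extra care: one must check that the value of $\Delta$ at a circle adjacent to $c_0$ forced by one $c_0$-edge agrees with its value forced by a parallel $c_0$-edge in a neighbouring state. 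Since each $2$-face of the cube containing a $c_0$-edge contains exactly one pair of parallel $c_0$-edges, and $\gamma'$ contributes $+1$ on each of them, the additivity $\gamma'(\xi_c\xi_{c_0})=\gamma'(\xi_{c_0}\xi_c)$ built into \eqref{eq:CommutativeCubeCommutative} suffices to close the argument.
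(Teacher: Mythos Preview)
Your setup is correct and matches the paper's: pulling back $\nu_+$ along the state involution $S\mapsto S\triangle\{c_0\}$ gives a function on $\coprod_T\pi_0(D_{-,T})$ that satisfies all the twisting-weight identities except at the $c_0$-saddle, where it is off by exactly $1$. The paper makes the same observation and then corrects by an explicit, one-line formula: fix an edge $e$ of the diagram adjacent to $c_0$, set $\Delta(C)=-1$ if $c_0\in T$ and $e\subset C$, and $\Delta(C)=0$ otherwise. A short case check shows this $\Delta$ satisfies precisely the conditions you wrote down, because the edge $e$ lies in exactly one of the two circles produced by the $c_0$-saddle and is carried additively through every other saddle.

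Your plan to construct $\Delta$ by rerunning the inductive machinery of \cref{prop:WeightConnectedInit} with the modified invariant $\gamma'$ has a genuine gap. That induction is not purely formal in the additivity of $\gamma$: the key sub-case (all crossings in $S$ split, no disjoint pair) invokes \cref{lem:NonGParallel}, whose hypothesis is that $D_\varnothing$ is \emph{connected}. Nothing guarantees that $D_{-,\varnothing}$ is connected; indeed, in the paper's overall strategy, \cref{lem:WeightCrossChange} is exactly the tool used to reduce an arbitrary diagram to one with connected all-$0$ smoothing before \cref{prop:WeightConnectedInit} is ever applied. So appealing to that induction here is circular. Your final paragraph also does not rescue the argument: the consistency check across a single $2$-face only reproduces the analogue of \cref{lem:ExtWellDefined}, but it is the configuration analysis (linear versus triangular arrangements of splitting crossings) that fails without connectedness, and that is where the deadlock described in the introduction originates. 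The fix is not more bookkeeping along the $c_0$-axis but the explicit correction above, which bypasses the induction entirely.
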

\begin{proof}
We suppose $D_+$ admits a twisting weight $\nu$ and construct a twisting weight $\nu'$ on $D_-$; the other way is similar.
Let $c_0$ be the crossing whose over- and under-arc are switched in $D_+$ and in $D_-$.
Choosing an edge $e$ in $D_+$ adjacent to $c_0$, we define a function $\phi:\coprod_{S\subset\mathcal X(D_-)}\pi_0(D_{-,S})\to \mathbb Z$ by
\begin{equation*}
\phi(C)\coloneqq\begin{cases*}
1 & if $C\in\pi_0(D_{-,S})$ with $c_0\in S$ and $e\subset C$,\\
0 & otherwise.
\end{cases*}
\end{equation*}
Note that each circle in a resolution of $D_-$ is identified with one in a resolution of $D_+$.
From this point of view, define a function $\nu':\coprod_{S\subset\mathcal X(D_-)}\pi_0(D_{-,S})\to\mathbb Z$ by
\begin{equation*}
\nu'(C)\coloneqq \nu(C) - \phi(C).
\end{equation*}
We show that $\nu'$ is indeed a twisting weight.
Suppose $S$ is a state on $D_-$ and $c\notin S$.
We check the conditions in the following two cases.

\paragraph{Case of $c\ne c_0$:}
First notice that, if a circle $C$ in $D_{-,S}$ is not adjacent to $c$, and if $C'$ is the corresponding circle in $D_{-,S\cup\{c\}}$, then since the edges of $C$ is exactly those of $C'$, we have
\begin{equation*}
\nu'(C)-\nu'(C')=\nu(C)-\phi(C)-\nu(C')+\phi(C') = 0
.
\end{equation*}
Hence, we have to verify the condition on the circles adjacent to $c$.
If $c$ splits a circle $C$ in $D_{-,S}$ into $C_1'$ and $C_2'$ in $D_{-,S\cup\{c\}}$, then we have
\begin{align*}
\iffundamenta & \fi
\nu'(C) - \nu'(C_1') - \nu'(C_2') + 1
\iffundamenta \\ \fi
&= \nu(C) - \phi(C) - \nu(C_1') + \phi(C_1') - \nu(C_2') + \phi(C_2') + 1 \\
&= -\phi(C) + \phi(C_1') + \phi(C_2')
\end{align*}
since $c$ also splits $C$ into $C_1'$ and $C_2'$ in $D_+$.
Observing the set of edges of $C$ is exactly the disjoint union of those of $C_1'$ and $C_2'$, one obtains $\nu'(C)=\nu'(C_1')+\nu'(C_2')-1$.
If $c$ merges circles $C_1$ and $C_2$ in $D_{-,S}$ into $C'\in D_{-,S\cup\{c\}}$, then
\begin{align*}
\iffundamenta & \fi
\nu'(C_1)+\nu'(C_2)-\nu'(C')
\iffundamenta \\ \fi
&= \nu(C_1)-\phi(C_1)+\nu(C_2)-\phi(C_2)-\nu(C')+\phi(C') \\
&= -\phi(C_1)-\phi(C_2)+\phi(C').
\end{align*}
As with the ``split'' case, the set of edges of $C'$ is exactly the disjoint union of those of $C_1$ and $C_2$, so we obtain $\nu'(C_1)+\nu'(C_2)=\nu'(C')$.

\paragraph{Case of $c=c_0$:}
As in the preceding case, it is enough to verify the condition on circles adjacent to $c_0$.
Note that if $c_0$ splits a circle $C$ in $D_{-,S}$ into $C_1'$ and $C_2'$ $D_{-,S\cup\{c_0\}}$, then it merges $C_1'$ and $C_2'$ into $C$ in the cube of $D_+$.
Furthermore, since the edge $e$ is contained in exactly one of $C_1'$ or $C_2'$, and since $\phi(C)=0$ by definition, we obtain
\begin{align*}
\iffundamenta & \fi
\nu'(C)-\nu'(C_1')-\nu'(C_2')+1
\iffundamenta \\ \fi
&= \nu(C) - \phi(C) - \nu(C_1') + \phi(C_1') - \nu(C_2') + \phi(C_2') + 1 \\
&= - \phi(C) - \phi(C_1') - \phi(C_2') + 1 \\
&= 0
.
\end{align*}
The condition is also verified similarly in the case of ``merge.''
\end{proof}

\begin{proof}[Proof of \cref{maintheorem}]
Let $D$ be an arbitrary link diagram.
Notice that, if $D$ is a disjoint union of diagrams $D^{(1)},\dots, D^{(n)}$, then a twisting weight on $D$ is determined by its restriction to each $D^{(k)}$.
Hence, we may assume $D$ is connected without loss of generality.
Notice that, in this case, there is a state $S\subset\chi(D)$ for which $D_S$ is connected.
Indeed, since $D$ is connected, every adjacent pair of circles in $D_S$ are connected by a crossing $c\in\mathcal X(D)$, so they are merged in either of the state $S\cup\{c\}$ or $S\setminus\{c\}$.
Iterating the process, one reaches to a required state.
If $S$ is such a state, then perform crossing change on each crossing $c\in S$; let us denote by $D'$ the resulting diagram.
By construction, the smoothing $D'_\varnothing$ is identified with $D_S$ and hence connected.
Therefore, $D'$ admits a twisting weight by \cref{prop:WeightConnectedInit}.
Furthermore, since $D$ and $D'$ are connected by a sequence of crossing change, so the twisting weight on $D'$ is transferred to $D$ by virtue of \cref{lem:WeightCrossChange}.
This completes the proof.
\end{proof}

\bibliographystyle{plain}
\bibliography{reference.bib}

\end{document}